\documentclass{article}
\usepackage{amssymb,amsmath,graphicx,ucs}
\usepackage[utf8x]{inputenc}
\usepackage[english]{babel}

\newtheorem{conjecture}{Conjecture}
\newtheorem{theorem}{Theorem}

\newenvironment{proof}{\noindent \emph{Proof. }}{\hfill \hbox{\rlap{$\sqcap$}$\sqcup$}\\}

\title{Icosahedral Tilings Revisited\thanks{This work was supported by the ANR project QuasiCool (ANR-12-JS02-011-01)}}
\author{
Nicolas Bédaride
\footnote{Aix Marseille Univ., CNRS, Centrale Marseille, I2M, UMR 7373, 13453 Marseille, France.}
\and
Thomas Fernique
\footnote{Univ. Paris 13, CNRS, Sorbonne Paris Cité, UMR 7030, 93430 Villetaneuse, France.}
}
\date{}

\begin{document}

\maketitle

\begin{abstract}
Icosahedral tilings, although non-periodic, are known to be characterized by their configurations of some finite size \cite{katz}.
This characterization has also been expressed in terms of a simple {\em alternation condition} \cite{socolar}.
We provide an alternative proof -- shorter and arguably easier -- of this fact.
We moreover conjecture that the alternation condition can be weakened.
\end{abstract}

\paragraph{Golden rhombohedra.}
Let us denote by $\pm\vec{v}_1,\ldots,\pm\vec{v}_6$ the vertices of the icosahedron (Fig.~\ref{fig:icosahedron}).
These vectors generate $\binom{6}{3}$ rhomboedra, called {\em golden rhombohedra}:
$$
T_{ijk}:=\{\lambda\vec{v}_i+\mu\vec{v}_j+\nu\vec{v}_k~|~0\leq\lambda,\mu,\nu\leq 1\}.
$$
Up to isometry, there are only two rhombohedra: the prolate (or thin)
$$
T_{123},~T_{124},~T_{136},~T_{145},~T_{156},~T_{235},~T_{246},~T_{256},~T_{345},~T_{346},
$$
and the oblate (or fat) whose volume is $\varphi$ times bigger
$$
T_{125},~T_{126},~T_{134},~T_{135},~T_{146},~T_{234},~T_{236},~T_{245},~T_{356},~T_{456}.
$$
\begin{figure}[hbtp]
\centering
\includegraphics[width=0.8\textwidth]{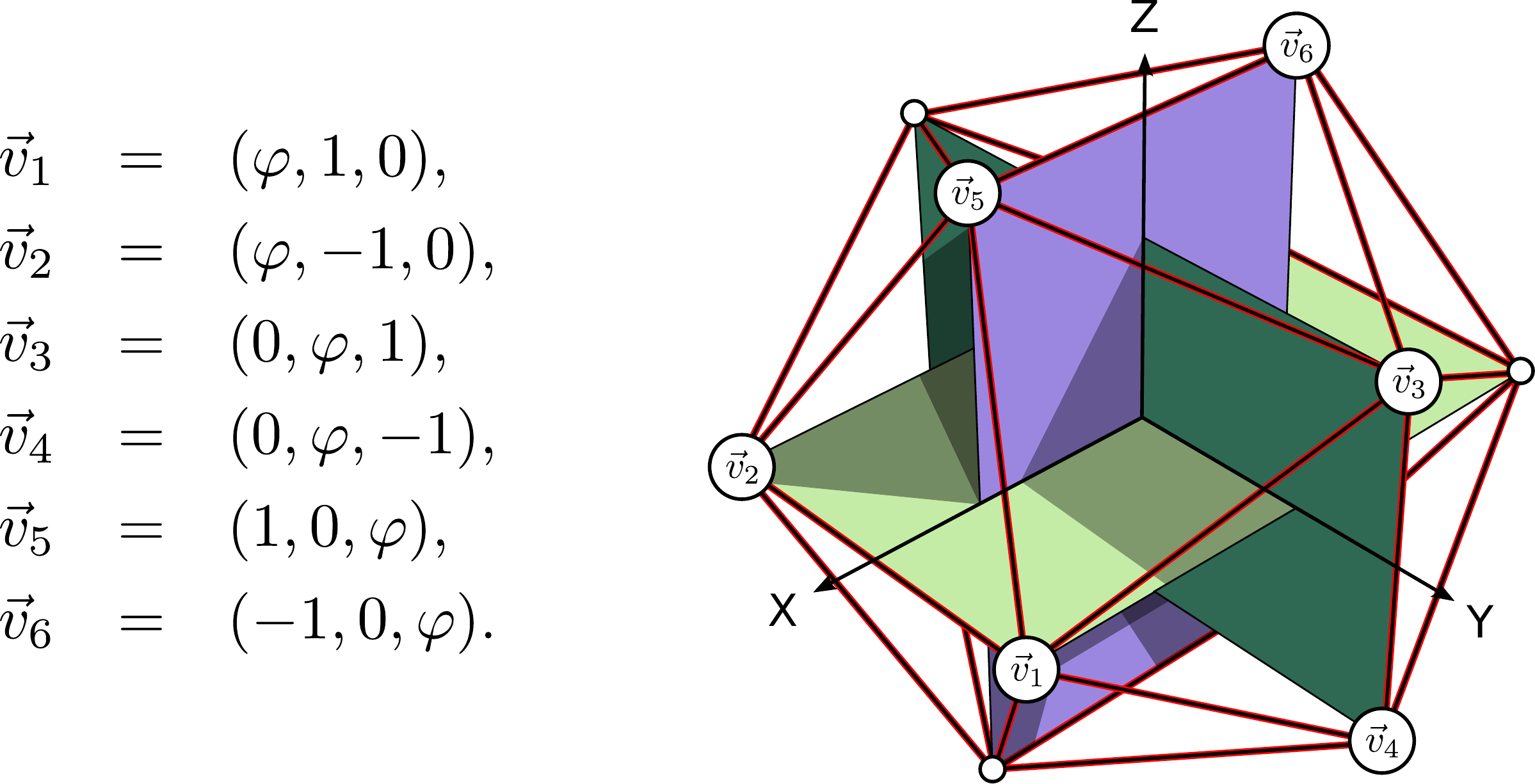}
\caption{The icosahedron, with $\varphi=\frac{1+\sqrt{5}}{2}$.}
\label{fig:icosahedron}
\end{figure}

\paragraph{Golden tilings and planarity.}
A {\em golden tiling} is a covering of $\mathbb{R}^3$ by translated copies of the golden rhombohedra which can pairwise intersect only on a whole face, a whole edge or a vertex ({\em face-to-face} condition).
Let $\vec{e}_1,\ldots,\vec{e}_6$ be the canonical basis of $\mathbb{R}^6$.
A golden tiling is {\em lifted} in $\mathbb{R}^6$ as follows: an arbitrary vertex is first mapped onto the origin of $\mathbb{R}^6$, then each rhombohedra $T_{ijk}$ is mapped onto the $3$-dim. face of a unit hypercube of $\mathbb{Z}^6$ generated by $\vec{e}_i$, $\vec{e}_j$ and $\vec{e}_k$, with two rhombohedra adjacent along a face generated by $\vec{v}_i$ and $\vec{v}_j$ being mapped onto two $3$-dim. faces adjacent along a $2$-dim. face generated by $\vec{e}_i$ and $\vec{e}_j$.
This defines a ``stepped'' manifold of codimension $3$ in $\mathbb{R}^6$ (unique up to the choice of the initial vertex).
By extension, golden tilings are said to have codimension $3$ -- they are also called $6\to 3$ tilings.
A golden tiling is moreover said to be {\em planar} if there is $t\geq 1$ and a $3$-dim. vectorial subspace $E$ of $\mathbb{R}^6$ such that the tiling can be lifted in the tube $E+[0,t]^6$.
The subspace $E$ is called the {\em slope} of the tiling and $t$ its {\em thickness}.\\

\paragraph{Grassmann coordinates.}
Recall (see, {\em e.g.}, \cite{HP}) that the Grassmann coordinates $G_{ijk}$ of a $3$-dim. space are the $3\times 3$ minors of the matrix formed by three vectors $\vec{w}_1$, $\vec{w}_2$ and $\vec{w}_3$ generating this space:
$$
G_{ijk}=\det \left(\begin{array}{ccc}
w_{1i} & w_{2i} & w_{3i}\\
w_{1j} & w_{2j} & w_{3j}\\
w_{1k} & w_{2k} & w_{3k}
\end{array}\right).
$$
Permuting two indices in a Grassmann coordinate changes it sign and a Grassmann coordinate with two identical indices is equal to zero.
The product of two Grassmann coordinates is equal to the sum of the products of these two Grassmann coordinates where a fixed index of the first coordinate has been exchanged with any possible index of the second one:
\begin{eqnarray*}
G_{ijk}G_{abc}
&=&G_{ajk}G_{ibc}+G_{bjk}G_{aic}+G_{cjk}G_{abi}\\
&=&G_{iak}G_{jbc}+G_{ibk}G_{ajc}+G_{ick}G_{abj}\\
&=&G_{ija}G_{kbc}+G_{ijb}G_{akc}+G_{ijc}G_{abk}.
\end{eqnarray*}
These quadratic relations are called {\em Plücker relations}.
They characterize the real numbers that are Grassmann coordinates of some $3$-dim. subspace of $\mathbb{R}^6$.\\

\paragraph{Subperiods.}
Let $\pi_{ijkl}$ denotes the orthogonal projection onto the space generated by $\vec{e}_i$, $\vec{e}_j$, $\vec{e}_k$ and $\vec{e}_l$.
A {\em $ijkl$-shadow} of a golden tiling is the image of this tilings under $\pi_{ijkl}$.
A vector of $\mathbb{Z}^4$ is a {\em $ijkl$-subperiod} of a golden tiling if it is a period of its $ijkl$-shadow.
By extension, one call $ijkl$-subperiod any vector of $\mathbb{R}^6$ whose image under $\pi_{ijkl}$ is a period of its $ijkl$-shadow.
One checks that a planar golden tiling admits $(x_i,x_j,x_k,x_l)$ as a $ijkl$-subperiod iff the Grassmann coordinates of its slope satisfy
$$
x_iG_{jkl}-x_jG_{ikl}+x_kG_{ijl}-x_lG_{ijk}=0.
$$

\paragraph{Icosahedral tilings.}
Icosahedral tilings are the planar golden tilings whose slope is generated by
\begin{eqnarray*}
\vec{w}_1&=&(\varphi,\varphi,0,0,1,-1),\\
\vec{w}_2&=&(1,-1,\varphi,\varphi,0,0),\\
\vec{w}_3&=&(0,0,1,-1,\varphi,\varphi).
\end{eqnarray*}
In other words, $\vec{w}_i$ is the vector of $\mathbb{R}^6$ whose $j$-th entry is the $i$-th one of $\vec{v}_j$.
One thus has $G_{ijk}=\det(\vec{v}_i,\vec{v}_j,\vec{v}_k)$, that is, $G_{ijk}$ is, up to the sign, equal to the volume of the golden rhombohedron $T_{ijk}$.
More precisely:
\begin{equation}\label{eq:prolate_alternation}
G_{213}=G_{124}=G_{136}=G_{145}=G_{516}=G_{235}=G_{246}=G_{256}=G_{435}=G_{346},
\end{equation}
\begin{equation}\label{eq:oblate_alternation}
G_{215}=G_{216}=G_{314}=G_{135}=G_{146}=G_{324}=G_{236}=G_{245}=G_{536}=G_{546}.
\end{equation}
One checks that, together with the Plücker relations\footnote{Actually, it suffices to consider the Plücker relation $G_{123}G_{345}=G_{423}G_{315}+G_{523}G_{341}$.}, the above equations yield a system of polynomial equations with two solutions: the slope of icosahedral tilings and its algebraic conjugate.
One checks that $\vec{w}_1$ is a $1234$- and  $3456$-subperiod, $\vec{w}_2$ is a $1256$- and $3456$-subperiod and $\vec{w}_3$ is a $1234$- and $1256$-subperiod.\\

\paragraph{Alternation condition and planarity.}
A {\em $ij$-worm} of a golden tiling is a biinfinite sequence of rhombohedra of this tiling, each one being adjacent to the following one along a face generated by $\vec{v}_i$ and $\vec{v}_j$.
In any worm, the rhombohedra of a given type (oblate or prolate) can occurs in two orientations; one says that a golden tiling has {\em alternation} if these orientations perfectly alternate (for each type of rhombohedra).

\begin{theorem}
A golden tiling with alternation is planar.
\end{theorem}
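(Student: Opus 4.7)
The plan is to exploit the subperiod framework developed in the excerpt: first extract, from the alternation condition, the three vectors $\vec{w}_1, \vec{w}_2, \vec{w}_3$ as subperiods of the tiling's shadows; then invoke the subperiod--Grassmann equivalence together with the Plücker relation noted in the footnote to force the slope to be the icosahedral one; and finally conclude planarity from the confinement of the lift by three independent transverse subperiods.

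For the first step, fix a pair $(i,j)$ and consider any $ij$-worm. In the lift to $\mathbb{R}^6$, each rhombohedron $T_{ija}$ of the worm is a translate of the $3$-face spanned by $\vec{e}_i, \vec{e}_j, \vec{e}_a$, and the orientation of that rhombohedron is encoded by whether $\vec{e}_a$ points forward or backward along the worm. Alternation says that these orientations alternate within each type (prolate or oblate). I would then show that one full alternating step (a pair of oppositely-oriented prolate rhombohedra, together with a pair of oppositely-oriented oblate rhombohedra) contributes a fixed net displacement in the lift, whose $\pi_{ijkl}$-projection onto an appropriate shadow is exactly $\pi_{ijkl}(\vec{w}_m)$ for the corresponding $m$. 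Running this computation for each pair $(i,j)$ and each appropriate $ijkl$-shadow is expected to recover all the subperiods listed in the excerpt.

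With the subperiods in hand, the subperiod--Grassmann equivalence transforms them into the linear constraints (\ref{eq:prolate_alternation})--(\ref{eq:oblate_alternation}) on the Grassmann coordinates of the (hypothetical) slope; combined with the single Plücker relation mentioned in the footnote, the excerpt already tells us that the slope is determined up to its algebraic conjugate, which is ruled out by requiring the rhombohedra to have positive volume. Since the three vectors $\vec{w}_1,\vec{w}_2,\vec{w}_3$ span a complement of the icosahedral slope and act as periods of the three relevant shadows, the stepped manifold lifted from the tiling is confined in the three transverse directions, hence lies in a slab of finite thickness: this is planarity.

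The main obstacle is Step 1 --- passing from a purely local alternation condition inside a single worm to an exact period of the entire shadow. The delicate point is propagation: one must show that the displacement extracted from one worm is consistent with the analogous displacement extracted from every other worm of the same direction, so that the tiling itself, and not merely one ribbon in it, is invariant (in the shadow sense) under $\vec{w}_m$. I would address this by an inductive argument propagating the period across adjacent worms that share a common rhombohedron, using alternation in the \emph{transverse} worms to keep the two lifts in sync.
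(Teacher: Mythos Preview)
Your proposal has a genuine gap, and it is not where you think it is. The passage from alternation to subperiods (your Step~1) is essentially immediate and the paper takes it for granted; the real difficulty lies in going from subperiods to planarity, which you dispatch in one sentence. Two things go wrong there. First, the Grassmann/Pl\"ucker argument you invoke is a statement about \emph{planar} tilings: it characterizes which $3$-planes can occur as slopes, so using it to determine ``the slope'' presupposes that a slope exists and is circular. Second (and related), the $\vec{w}_i$ span the icosahedral slope~$E$ itself, not a complement of it; moreover a subperiod is only a period of a $4$-dimensional \emph{shadow}, not of the lift in~$\mathbb{R}^6$, so your claim that the lift is ``confined in the three transverse directions'' simply does not follow from possessing those subperiods.

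What the shadow-periods actually give --- and this is the first half of the paper's argument --- is that, writing the lift as $\vec{x}+\sum_i z_i(\vec{x})\vec{r}_i$ with $\vec{x}\in E$ and $\vec{r}_i\in E'$ the algebraic conjugates of the~$\vec{w}_i$, each $z_i$ has bounded fluctuations in two of the three directions $\vec{w}_1,\vec{w}_2,\vec{w}_3$; hence $z_1\equiv f(a)$, $z_2\equiv g(b)$, $z_3\equiv h(c)$ for one-variable real functions $f,g,h$. Nothing so far prevents $f,g,h$ from being unbounded, so the lift need not lie in any tube. The paper closes this gap with an \emph{additional} projection~$\pi_{1356}$ and two further subperiod vectors $\vec{w}_4,\vec{w}_5$, yielding a functional equation of the form
\[
(2-\varphi)f(a)+(\varphi-1)g(b)+h(c)\;\equiv\;\psi\bigl((2-\varphi)a+(\varphi-1)b+c\bigr),
\]
from which linearity (up to bounded error) of $f,g,h$, and hence planarity, is deduced. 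This functional-equation step is the heart of the proof and is entirely absent from your outline.
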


\begin{proof}
Let $E$ be the slope of the icosahedral tilings and $E'$ be its algebraic conjugate.
One has $E\cap E'=\{\vec{0}\}$.
Let $\vec{r}_i$ denotes the algebraic conjugate of $\vec{w}_i$.
Let $\mathcal{S}$ be a golden tiling satisfying the alternation condition, hence having (at least) the subperiods of icosahedral tilings.
One parametrizes $\mathcal{S}$ as follows:
$$
\mathcal{S}=\{\vec{x}+z_1(\vec{x})\vec{r}_1+z_2(\vec{x})\vec{r}_2+z_3(\vec{x})\vec{r}_3~|~\vec{x}\in E\}
$$
First, consider the projection $\pi_{1256}$:
$$
\pi_{1256}(\mathcal{S})=\{\pi_{1256}(\vec{x})+z_1(\vec{x})\pi_{1256}(\vec{r}_1)+z_2(\vec{x})\pi_{1256}(\vec{r}_2)+z_3(\vec{x})\pi_{1256}(\vec{r}_3)~|~\vec{x}\in E\}.
$$
Recall that $\vec{w}_2$ and $\vec{w}_3$, hence $\vec{r}_2$ and $\vec{r}_3$, are $1256$-subperiods of icosahedral tilings, hence of $\mathcal{S}$.
The intersection of $\pi_{1256}(\mathcal{S})$ and $\pi_{1256}(E')$ is thus a $2$-dim. surface which is both $\pi_{1256}(\vec{r}_2)$ and $\pi_{1256}(\vec{r}_3)$ periodic, thus stays at bounded distance from the plane generated by $\pi_{1256}(\vec{r}_2)$ and $\pi_{1256}(\vec{r}_3)$.
Now, when $\vec{x}$ moves along $\vec{w}_2$ or $\vec{w}_3$, the quantity
$$
\pi_{1256}(\vec{x})+z_2(\vec{x})\pi_{1256}(\vec{r}_2)+z_3(\vec{x})\pi_{1256}(\vec{r}_3)
$$
remains a plane generated by $\pi_{1256}(\vec{r}_2)$ and $\pi_{1256}(\vec{r}_3)$.
Since this plane does not contain $\pi_{1256}(\vec{r}_1)$, this ensures that $z_1$ has bounded fluctuations in the directions $\vec{w}_2$ and $\vec{w}_3$.
There is thus a real function $f$ such that $z_1(a\vec{w}_1+b\vec{w}_2+c\vec{w}_3)\equiv f(a)$, where $f\equiv g$ means that $f-g$ is uniformly bounded.\\
One then similarly gets, by considering the projection $\pi_{1234}$, that $z_2$ has bounded fluctuations in the directions $\vec{w}_1$ and $\vec{w}_3$.
There is thus a real function $g$ such that $z_2(a\vec{w}_1+b\vec{w}_2+c\vec{w}_3)\equiv g(b)$.\\
Last, one similarly gets, by considering the projection $\pi_{3456}$, that $z_3$ has bounded fluctuations in the directions $\vec{w}_1$ and $\vec{w}_2$.
There is thus a real function $h$ such that $z_3(a\vec{w}_1+b\vec{w}_2+c\vec{w}_3)\equiv h(c)$.\\
Now, let us introduce the vectors defined by
$$
2\vec{w}_4:=-\vec{w}_1+\varphi\vec{w}_2+(1-\varphi)\vec{w}_3,
$$
$$
2\vec{w}_5:=\vec{w}_1+(2-\varphi)\vec{w}_2+(1-\varphi)\vec{w}_3.
$$
One checks that $\vec{w}_4$ and $\vec{w}_5$ are independent $1356$-subperiods of icosahedral tilings.
Let $\vec{r}_4$ and $\vec{r}_5$ denotes their algebraic conjugates.
Consider the projection
$$
\pi_{1356}(\mathcal{S})=\{\pi_{1356}(\vec{x})+z_1(\vec{x})\pi_{1356}(\vec{r}_1)+z_2(\vec{x})\pi_{1356}(\vec{r}_2)+z_3(\vec{x})\pi_{1356}(\vec{r}_3)~|~\vec{x}\in E\}.
$$
One computes
$$
\vec{w}_1=(2-\varphi)\vec{w}_3+(\varphi-2)\vec{w}_4+\varphi\vec{w}_5,
$$
$$
\vec{w}_2=(\varphi-1)\vec{w}_3+\vec{w}_4+\vec{w}_5,
$$
and the same relations holds for the $\vec{r}_i$'s.
One can thus write
$$
\pi_{1356}(\mathcal{S})=\{\pi_{1356}(\vec{x})+\vec{u}(\vec{x})+[(2-\varphi)z_1+(\varphi-1)z_2+z_3](\vec{x})\pi_{1356}(\vec{r}_3)~|~\vec{x}\in E\}.
$$
where $\vec{u}(\vec{x})$ is in the plane generated by $\vec{r}_4$ and $\vec{r}_5$.
When $\vec{x}$ moves along $\vec{w}_4$ and $\vec{w}_5$, the above projection remains at bounded distance from a plane generated by $\pi_{1456}(\vec{r}_4)$ and $\pi_{1456}(\vec{r}_5)$ (because $\vec{w}_4$ and $\vec{w}_5$ -- hence $\vec{r}_4$ and $\vec{r}_5$ -- are $1356$-subperiods of icosahedral tilings).
This ensures that $(2-\varphi)z_1+(\varphi-1)z_2+z_3$ has bounded fluctuations in the directions $\vec{w}_4$ and $\vec{w}_5$.
Thus, for any $(\lambda,\mu,\nu)$ such that $\lambda\vec{w}_1+\mu\vec{w}_2+\nu\vec{w}_3$, $\vec{w}_4$ and $\vec{w}_5$ are linearly independent, there is a real function $\psi$ such that $[(2-\varphi)z_1+(\varphi-1)z_2+z_3](a\vec{w}_1+b\vec{w}_2+c\vec{w}_3)\equiv \psi(\lambda a+\mu b+\nu c)$, 
For $(\lambda,\mu,\nu)=(2-\varphi,\varphi-1,1)$, this yields
$$
(2-\varphi)f(a)+(\varphi-1)g(b)+h(c)\equiv\psi((2-\varphi)a+(\varphi-1)b+c).
$$
From this it is not hard to deduce the linearity (up to bounded fluctuations) of $\psi$, then $f$, $g$ and $h$, thus $z_1$, $z_2$ and $z_3$.
The planarity of $\mathcal{S}$ follows.
\end{proof}

\paragraph{Weak alternation.}
Actually, one checks that only one of the equations (\ref{eq:prolate_alternation}) or (\ref{eq:oblate_alternation}) suffices, together with the Plücker relations, to characterize the Grassmann coordinates of the slope of the icosahedral tilings.
That is, the sole alternation of one type of rhomboedron suffices to characterize the icosahedral tilings {\em among planar tilings}.
Does such a {\em weak alternation} also enforces planarity?
It is actually not hard to provide counter-examples using only the rhomboedra which are not forced to alternate.
We therefore consider {\em nondegenerate} alternation: there is no infinite run of rhomboedra all of the same type.

\begin{conjecture}
  A golden tiling with nondegenerate weak alternation is planar.
\end{conjecture}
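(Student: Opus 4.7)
The plan is to follow the architecture of the proof of the main theorem, replacing the exact subperiod arguments by approximate versions justified by nondegeneracy. Throughout, assume without loss of generality that equation~(\ref{eq:prolate_alternation}) holds (prolate alternation), while only the nondegeneracy assumption is made for the oblate rhombohedra.

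The first step is to quantify what weak alternation buys us in terms of subperiods. In any $ij$-worm, the four rhombohedra types $T_{ijk}$ ($k\neq i,j$) split into two prolate and two oblate types. Prolate alternation forces the partial sums of the prolate translation vectors along the worm to stay on a rank-one lattice, while the oblate contributions are a priori unconstrained. I would show that the $ijkl$-shadow of $\mathcal{S}$ is, as a result, periodic \emph{up to a residual drift} carried entirely by the oblate directions. The key quantitative claim is that, under nondegeneracy, this residual drift has sublinear growth along the worm: because no infinite monotype run exists, a combinatorial pigeonhole over long worm segments forces the oblate contributions to cancel on average, so that the shadow stays at bounded distance from a truly periodic shadow in the relevant directions.

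Granted this, the proof of the theorem can be transcribed almost verbatim. One parametrizes $\mathcal{S}$ over the slope $E$ of the icosahedral tilings by functions $z_1,z_2,z_3$ along $\vec{r}_1,\vec{r}_2,\vec{r}_3$; the projections $\pi_{1256}$, $\pi_{1234}$, $\pi_{3456}$, and $\pi_{1356}$ still land at bounded distance from the respective periodic planes, and the same conclusion that $z_1\equiv f(a)$, $z_2\equiv g(b)$, $z_3\equiv h(c)$ together with the $1356$ identity
$$
(2-\varphi)f(a)+(\varphi-1)g(b)+h(c)\equiv\psi((2-\varphi)a+(\varphi-1)b+c)
$$
yields linearity, hence planarity. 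The symbol $\equiv$ here must be understood modulo a bound that now depends on the nondegeneracy modulus, but the linearity extraction at the end is insensitive to the size of the implicit constant.

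The main obstacle is precisely the first step: passing from ``no infinite run'' (a purely qualitative statement) to a uniform bound on the oblate drift along worms. Nondegeneracy by itself does not forbid arbitrarily long oblate runs at arbitrarily far locations, so some additional structure must be extracted. A plausible route is to use the face-to-face condition together with prolate alternation to show that two crossing worms cannot accumulate large drifts independently, giving a global consistency that rules out unbounded drifts; another is to argue by contradiction, producing from an unbounded drift a forbidden configuration in a neighbouring worm. Establishing such a uniform bound rigorously is where I expect all the work to be, and failing that, it is conceivable that the conjecture needs to be strengthened to a quantitative form of nondegeneracy (\emph{e.g.}, run lengths of a single type bounded by a fixed constant), for which the above scheme would go through without change.
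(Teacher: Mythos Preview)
The statement you are trying to prove is presented in the paper as a \emph{conjecture}, not a theorem: the paper offers no proof of it. There is therefore no ``paper's own proof'' to compare your attempt against; the authors explicitly leave this open after noting that the Grassmann-coordinate argument shows weak alternation characterizes the icosahedral slope \emph{among planar tilings}, and that counterexamples to planarity exist in the degenerate case.

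Turning to the merits of your proposal: your overall architecture (reuse the parametrization by $z_1,z_2,z_3$, feed approximate subperiods into the four projections, and extract linearity from the resulting additive relation) is the natural thing to try, and you are right that the linearity-extraction step is robust to the size of the additive constant. But the step you flag as the ``main obstacle'' is a genuine gap, not a technicality. Nondegeneracy only forbids \emph{infinite} monotype runs; it does not forbid runs of unbounded length, nor does it say anything at all about the \emph{orientations} of the oblate rhombohedra that do appear. In particular, your pigeonhole claim that ``no infinite monotype run forces the oblate contributions to cancel on average'' is unjustified: one can imagine a worm in which prolates alternate perfectly while every oblate that appears has the \emph{same} orientation, producing linear (not sublinear) oblate drift, and nothing in the hypotheses rules this out at the level of a single worm. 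Any argument must therefore be genuinely two- or three-dimensional, using interactions between worms, and you have not supplied one.

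Your closing remark --- that the conjecture may need to be strengthened to a quantitative nondegeneracy hypothesis --- is honest, but note that even a uniform bound on monotype run lengths would not by itself control oblate \emph{orientation} statistics, so it is not clear your scheme goes through even under that strengthening. In short: the proposal is a reasonable heuristic outline, but it does not close the problem, and the paper does not claim to either.
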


\thebibliography{bla}
\bibitem{HP} W.~V.~D.~Hodge, D.~Pedoe, {\em Methods of algebraic geometry}, vol. 1, Cambridge University Press, Cambridge, 1984.
\bibitem{katz} A.~Katz, {\em Theory of matching rules for the 3-dimensional {P}enrose tilings}, Comm. Math. Phys. {\bf 118} (1988), pp. 263--288.
\bibitem{socolar} J.~E.~S.~Socolar, {\em Weak matching rules for quasicrystals}, Comm. Math. Phys. {\bf 129} (1990), pp. 599--619.	

  
\end{document}